\theoremstyle{plain}
\newtheorem{theorem}{Theorem}
\newtheorem{conjecture}[theorem]{Conjecture}
\newtheorem{corollary}[theorem]{Corollary}
\newtheorem{definition}[theorem]{Definition}
\newtheorem{lemma}[theorem]{Lemma}
\newtheorem{proposition}[theorem]{Proposition}
\theoremstyle{definition}
\newtheorem{remark}[theorem]{Remark}
\numberwithin{equation}{section}
\numberwithin{theorem}{section}
\newcommand{\dive}{\operatorname{div}}
\renewcommand{\div}{\operatorname{div}}
\newcommand{\Hess}{\operatorname{Hess}}
\newcommand{\rr}{\mathbb{R}}
\newcommand{\nn}{\mathbb{N}}
\newcommand{\bb}{\mathbb{B}}
\newcommand{\riem}{\operatorname{Riem}}
\newcommand{\ric}{\operatorname{Ric}}
\newcommand{\supp}{\operatorname{supp}}
\newcommand{\tr}{\operatorname{trace}}
\newcommand{\dv}{\,\mathrm{dv}}
\newcommand{\dx}{\,\mathrm{d}x}
\newcommand{\sgn}{\operatorname{sgn}}
\def\XXint#1#2#3{{\setbox0=\hbox{$#1{#2#3}{\int}$}
     \vcenter{\hbox{$#2#3$}}\kern-.5\wd0}}
\newcommand{\inj}{\mathrm{inj}}
\newcommand{\dist}{\mathrm{dist}}
\renewcommand{\a}{\alpha}
\newcommand{\ve}{\varepsilon}
\renewcommand{\d}{\delta}
\renewcommand{\l}{\lambda}
\newcommand{\vp}{\varphi}
\newcommand{\CL}{\mathcal{L}}
\newcommand{\C}{\mathscr{C}}
\renewcommand{\L}{\mathscr{L}}
\begin{document}


\title[Complete manifolds are $L^{p}$-positivity preserving]{$L^{p}$ Positivity Preserving and a conjecture by M. Braverman, O. Milatovic and M. Shubin}
\author{Stefano Pigola}
\address{Universit\`a degli Studi di Milano-Bicocca\\ Dipartimento di Matematica e Applicazioni \\ Via Cozzi 55, 20126 Milano - ITALY}
\email{stefano.pigola@unimib.it}
\author{Giona Veronelli}
\address{Universit\`a degli Studi di Milano-Bicocca\\ Dipartimento di Matematica e Applicazioni \\ Via Cozzi 55, 20126 Milano - ITALY}
\email{giona.veronelli@unimib.it}
\date{\today}

\begin{abstract}
In this paper we prove that a complete Riemannian manifold is $L^p$-positivity preserving for any $p\in(1,\infty)$. This means that any $L^p$ function which solves $(-\Delta + 1)u\ge 0$ in the sense of distributions is necessarily non-negative. In particular, the case $p=2$ of our result answers in the affermative a conjecture formulated by M. Braverman, O. Milatovic and M. Shubin in 2002. The two main ingredients are a new \textsl{a-priori} regularity result for 
 positive subharmonic distributions, which in turn permits to prove a Liouville type theorem, and a Brezis-Kato inequality on Riemannian manifolds. Both these results rely on a smooth monotonic approximation of distributional solutions of $\Delta u \ge \lambda(x) u$ of independent interest. \end{abstract}

\maketitle

\section{Introduction and main result}

\subsection{Basic notation}

Let $(M,g)$ be a connected, possibly incomplete, $n$-dimensional Riemannian manifold, $n \geq 2$, endowed with its Riemannian measure $\dv$. Unless otherwise specified, integration will be always performed with respect to this measure. The Riemannian metric $g$ gives rise to the intrinsic distance $\dist(x,y)$ between a couple of points $x,y \in M$. The corresponding open metric ball centered at $o \in M$ and of radius $R>0$ is denoted by $B_{R}(o)$. The $\riem$ and $\ric$ symbols are used to denote, respectively, the Riemann and the Ricci curvature tensors of $(M,g)$. Finally, the Laplace-Beltrami operator of $(M,g)$ is denoted by $\Delta = \tr \Hess = \div \nabla$. We stress that we are using the sign convention according to which, on the real line, $\Delta = +\frac{d^{2}}{dx^{2}}$.\smallskip

This paper deals with (sub)solutions of elliptic PDEs involving the Schr\"odinger operator
\[
\CL = \Delta - \l(x)
\]
where $\l(x)$ is a smooth function.

We say that $u \in L^{1}_{loc}(M)$ is a {\it distributional solution} of $\CL u \geq f  \in L^{1}_{loc}(M)$ if, for every $0 \leq \vp \in C^{\infty}_{c}(M)$,
\[
\int_{M}u \CL\vp \geq \int_{M}f \vp.
\]
Sometimes, we will call $u$ a {\it distributional subsolution} of the equation $\CL u = f$. The notion of {\it distributional supersolution} is defined by reversing the inequalities and we say that $u \in L^{1}_{loc}(M)$ is a {\it distributional solution} of $\CL u = f$ if it is a subsolution and a supersolution at the same time.\smallskip

In the presence of more local regularity of the function involved we can also speak of a weak solution of the same inequality. Namely,  $u \in W^{1,1}_{loc}(M)$ is a {\it weak solution} of $\CL u \geq f \in L^{1}_{loc}(M)$ if, for every $0 \leq \vp \in C^{\infty}_{c}(M)$, it holds
\[
- \int_{M}  g(\nabla u , \nabla \vp)    \geq \int_{M} (\l + f) \vp
\]
By a density argument, the inequality can be extended to test functions $0 \leq \vp \in W^{1,\infty}_{c}(M)$. If the regularity of $u$ is increased to $W^{1,2}_{loc}(M)$ then test functions can be taken in $W^{1,2}_{c}(M)$.

Finally, we need to recall that  a function $u \in W^{1,1}_{loc}(M)$ is a distributional solution of $\CL u \geq f \in L^{1}_{loc}$ if and only if it is a weak solution of the same inequality.


\subsection{The BMS conjecture}

According to B. G\"uneysu, \cite{Gu-JGEA}, we set the next

\begin{definition}
Let $1 \leq p \leq +\infty$. The Riemannian manifold $(M,g)$ is said to be $L^{p}$-Positivity Preserving if the following implication holds true:
\begin{equation}\label{P}\tag{$\mathrm{P}_{p}$}
\begin{cases}
(- \Delta + 1) u \geq 0 \text{ distributionally on M}\\
u \in L^{p}(M)
\end{cases}
\Longrightarrow
u \geq 0\text{ a.e. on }M.
\end{equation}
More generally, one can consider any family of functions $\C \subseteq L^{1}_{loc}(M)$ and say that $(M,g)$ is $\C$-Positivity Preserving if the above implication holds when $L^{p}(M)$ is replaced by $\C$.
\end{definition}

The following conjecture, motivated by the study of self-adjointness of covariant Schr\"odinger operators (see the discussions in \cite{Gu-survey, Gu-book}) was formulated by M. Braverman, O. Milatovic and M. Shubin in \cite[Appendix B]{BMS}.

\begin{conjecture}[BMS conjecture]
Assume that $(M,g)$ is geodesically complete.  Then $(M,g)$ is $L^{2}$-positivity preserving.
\end{conjecture}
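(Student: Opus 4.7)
My plan is to show that the negative part $u_{-}:=\max(-u,0)\in L^{2}(M)$ vanishes almost everywhere. If Kato's chain rule could be freely applied in the distributional $L^{2}$ setting, then from $(-\Delta+1)u\geq 0$ one would derive
\[
\Delta u_{-}\geq u_{-}\quad \text{distributionally on }M,
\]
because, on one hand, $-\Delta u\geq -u$ gives $\chi_{\{u<0\}}(-\Delta u)\geq \chi_{\{u<0\}}(-u)=u_{-}$, while on the other hand Kato's inequality applied to $-u$ gives $\Delta u_{-}\geq \chi_{\{u<0\}}(-\Delta u)$. Thus the BMS conjecture reduces to the following Liouville-type statement: on a complete manifold, every non-negative $L^{2}$ distributional solution of $\Delta v\geq v$ must vanish identically.

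If $v=u_{-}$ were a priori known to lie in $W^{1,2}_{\mathrm{loc}}(M)$, the Liouville step would follow from the standard cutoff computation. Geodesic completeness supplies, for every $R>0$, a cutoff $\eta_{R}\in W^{1,\infty}_{c}(M)$ with $\eta_{R}\equiv 1$ on $B_{R/2}(o)$, $\supp\eta_{R}\subseteq B_{R}(o)$ and $|\nabla\eta_{R}|\leq C/R$. Testing the weak inequality against the admissible function $\eta_{R}^{2}v\geq 0$ and applying a Cauchy-Schwarz splitting yields
\[
\int_{M}\eta_{R}^{2}v^{2}+\tfrac{1}{2}\int_{M}\eta_{R}^{2}|\nabla v|^{2}\leq 2\int_{M}v^{2}|\nabla\eta_{R}|^{2}\leq \frac{2C^{2}}{R^{2}}\|v\|_{L^{2}(M)}^{2},
\]
whose right-hand side vanishes as $R\to\infty$, forcing $v\equiv 0$.

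The genuine obstacle — and, I expect, the technical core of the paper — is that for a merely distributional $u\in L^{2}(M)$ neither the use of Kato's chain rule nor the integration by parts above is a priori justified: both require regularity that is absent from the hypotheses. I would address both difficulties simultaneously by constructing a \emph{smooth monotonic approximation} for distributional inequalities of the form $\Delta w\geq \lambda(x)w$. Given a non-negative distributional solution, the goal is to produce, along an exhaustion of $M$ by relatively compact open sets, a sequence $w_{k}\in C^{\infty}$ with $w_{k}\uparrow w$ almost everywhere and $(\Delta-\lambda)w_{k}\geq 0$ classically on the $k$-th set. A natural device is to combine local Friedrichs mollification with the positivity-preserving resolvent $(I-\ve(\Delta-\lambda))^{-1}$ on small relatively compact balls, arranging the smoothing so that the sign of the differential inequality survives in the limit. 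The delicate point here is that on a Riemannian manifold without curvature control standard mollification is not canonical and does not commute with $\Delta$, so preserving $(\Delta-\lambda)w_{k}\geq 0$ through the regularization is not automatic.

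Granting such an approximation, the two ingredients announced in the abstract follow quickly and combine to close the proof. Applying the cutoff computation to each smooth approximant $w_{k}$ of $v$ and passing to the limit by monotone convergence and Fatou delivers both the a priori regularity $v\in W^{1,2}_{\mathrm{loc}}(M)$ and the desired Liouville theorem for non-negative $L^{2}$ distributional solutions of $\Delta v\geq v$. The same approximation scheme, used on $u$ itself, underlies a Riemannian Brezis-Kato inequality that rigorously justifies the passage to the negative part and produces the distributional inequality $\Delta u_{-}\geq u_{-}$ needed to invoke the Liouville result and conclude $u_{-}\equiv 0$ a.e. Completeness of $(M,g)$ enters only in the last step, through the existence of the global linear-growth cutoffs $\eta_{R}$.
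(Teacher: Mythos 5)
Your overall architecture matches the paper's almost step for step: use a Riemannian Brezis--Kato inequality to pass from $(-\Delta+1)u\geq 0$ to $\Delta u_{-}\geq u_{-}\geq 0$, reduce the conjecture to a Liouville theorem for non-negative $L^{2}$ distributional subsolutions, establish a priori $W^{1,2}_{\mathrm{loc}}$ regularity to justify the Caccioppoli test, and finally invoke completeness only through the linear-growth cutoffs $\eta_{R}$. Your small variant of keeping the zeroth-order term ($\Delta v\geq v$ rather than merely $\Delta v\geq 0$) lets the LHS term $\int\eta_{R}^{2}v^{2}$ kill the constant directly; the paper instead proves a Liouville theorem for subharmonic distributions ($\Delta v\geq 0$, any $p\in(1,\infty)$) and observes afterwards that $\Delta c\geq c\geq 0$ forces $c=0$. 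That is a cosmetic difference, not a genuinely different route.

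The genuine gap is that the smooth monotonic approximation --- which you correctly single out as the technical heart of the matter --- is not actually constructed, and the device you gesture at would not obviously work. Friedrichs mollification is not intrinsic on a Riemannian manifold and does not commute with $\Delta$; the resolvent $(I-\ve\CL)^{-1}$ is not known to be positivity preserving at the $L^{1}_{\mathrm{loc}}$ level without further work, does not by itself produce $C^{\infty}$ output, and, most importantly, nothing in your sketch ensures \emph{monotonicity} of the resulting sequence, which you then rely on when invoking monotone convergence in the Liouville step. The paper sidesteps all of this through potential theory: it first applies a Protter--Weinberger reduction, dividing by a smooth positive solution $\alpha$ of $\CL\alpha=0$ on a small ball so that $\CL u\geq 0$ becomes $\Delta_{\alpha}(u/\alpha)\geq 0$ for a weighted Laplacian; it then writes $\Delta_{\alpha}$ in divergence form $\partial_{i}(a^{ij}\partial_{j})$ in local coordinates, invokes a theorem of Sj\"ogren identifying distributional subsolutions of such operators with $\mathscr L$-subharmonic functions in the Brelot--Herv\'e sense, and finally applies a monotone smooth approximation theorem of Bonfiglioli--Lanconelli in that axiomatic framework. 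Without an argument at the level of detail of this chain, the approximation --- and hence both the Brezis--Kato inequality and the regularity statement --- remains unproved. A second, more minor point: the approximants exist only on small neighborhoods $\Omega'\Subset\Omega$, so you cannot plug the global cutoff $\eta_{R}^{2}w_{k}$ into the inequality for $w_{k}$; the correct order is to first deduce local $W^{1,2}_{\mathrm{loc}}\cap L^{\infty}_{\mathrm{loc}}$ regularity of $v$ from the local approximants (as the paper's Theorem on regularity of positive subharmonic distributions does), and only then run the global Caccioppoli argument on $v$ itself.
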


The validity of the BMS conjecture has been verified under additional restrictions on the geometry of the complete Riemannian manifold $(M,g)$. More precisely:
\begin{itemize}
\item In the seminal paper \cite[p.140]{Ka},  T. Kato proved that $\rr^{n}$ is $L^{2}$-Positive Preserving.
 \item  In \cite[Proposition B.2]{BMS} it is assumed  that $(M,g)$ has $C^{\infty}$-bounded geometry, i.e., it satisfies $\| \nabla^{(j)}\riem \|_{L^{\infty}} < +\infty$ for any $j\in \nn$ and $\inj(M)>0$.
 \item  In \cite{Gu-JGEA}, B. G\"uneysu showed that $\ric \geq 0$  is enough to conclude. Subsequently, in \cite[Theorem XIV.31]{Gu-book}, he proved that if $\ric \geq -K^{2}$ then $(M,g)$ is $L^{p}$-Positivity Preserving on the whole scale $p \in [1,+\infty]$.
 \item  In \cite{BS}, D. Bianchi and A.G. Setti observed that the BMS conjecture is true even if the Ricci curvature condition is relaxed to
 \[
 \ric \geq - C (1+ r(x))^{2}
 \]
where $r(x) = \dist(x,o)$ for some origin $o \in M$. Under the same curvature assumptions, the $L^p$-Positivity Preserving can be extended almost directly to any $p\in[2,\infty)$, and with a little more effort to any $p\in[1,+\infty]$, \cite{Gu-book,MV}.
 \item  In the very recent \cite{MV}, L. Marini and the second author considered the case of a Cartan-Hadamard manifold (complete, simply connected with $\riem \leq 0$). In this setting it is proved that $(M,g)$ is $L^{p}$-Positivity Preserving for any $p \in [2,+\infty)$ provided
 \[
 -(m-1)B^{2}(1+r(x))^{\a +2} \leq \ric \leq -(m-1)^{2}A^{2}(1+ r(x))^{\a}
 \]
 for some $\a>0$ and  $B > \sqrt{2}(m-1)A>0$.
\end{itemize}

Kato's argument in $\rr^n$ relies on the positivity of the operator $(-\Delta + 1)^{-1}$ acting on the space of tempered distributions, which in turn is proved using the explicit expression of its kernel. Instead, in all the above quoted works on Riemannian manifolds, the proofs stem from an argument by B. Davies, \cite[Proposition B.3]{BMS} that relies on the existence of good cut-off functions with controlled gradient and Laplacian. Obviously, the construction of these cut-offs requires some assumption on the curvature.

\subsection{Main result}

In the present  paper we approach the $L^{p}$-Positivity Preserving following a different path which is based on a new \textsl{a priori} regularity result for 
positive subharmonic distributions (see Section \ref{sect:reg}), which permits to prove a Liouville type theorem (see Section \ref{section_Liouville}), and a Brezis-Kato inequality on Riemannian manifolds (see Section \ref{sec:Kato}). Both these results, in turn, rely on a smooth monotonic approximation of distributional solutions of $\CL u \geq 0$ of independent interest (see Section \ref{sect:approx}). This strategy will enable us to avoid any curvature restriction and prove the next

\begin{theorem}\label{th:BMS}
 Let $(M,g)$ be a complete Riemannian manifold. Then $M$ is $L^{p}$-Positivity Preserving for every $p \in (1,+\infty)$. In particular, the BMS conjecture is true.
\end{theorem}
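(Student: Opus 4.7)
The natural strategy is to reduce $L^{p}$-positivity preserving to a Liouville-type statement for non-negative $L^{p}$ subharmonic functions on complete manifolds. Suppose $u\in L^{p}(M)$ satisfies $(-\Delta+1)u\ge 0$ distributionally, and write $u=u_{+}-u_{-}$ with $u_{\pm}\ge 0$, both in $L^{p}(M)$. It is enough to prove that $u_{-}\equiv 0$. Applying the distributional Kato inequality to $\Delta u\le u$, and using that on $\{u<0\}$ one has $u=-u_{-}$, one obtains
\[
\Delta u_{-} \;\ge\; -\chi_{\{u<0\}}\Delta u \;\ge\; \chi_{\{u<0\}}(-u) \;=\; u_{-}
\]
in the sense of distributions. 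Thus $u_{-}\ge 0$ is a non-negative $L^{p}$ subharmonic distribution on $M$ which, as a bonus, satisfies $\Delta u_{-}\ge u_{-}$ distributionally.

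At this stage the purely distributional nature of $u_{-}$ is an obstacle, since classical Liouville-type theorems require enough regularity to justify the use of $u_{-}$ (or a power of it) as the argument of a test function. I would therefore first invoke the a priori regularity statement of Section~\ref{sect:reg} to upgrade $u_{-}$ from $L^{1}_{loc}$ to $W^{1,2}_{loc}(M)$. Next, for $p\neq 2$, I would appeal to the Riemannian Brezis--Kato inequality of Section~\ref{sec:Kato} to boost the integrability of $u_{-}$, so that standard $L^{2}$-type energy quantities become available on the full range $p\in(1,+\infty)$. Finally, applying the Liouville-type theorem of Section~\ref{section_Liouville} to the non-negative subharmonic $u_{-}\in L^{p}(M)$ yields that $u_{-}$ is constant on each connected component of $M$ (on infinite-volume components this already forces $u_{-}=0$). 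The pointwise inequality $0=\Delta u_{-}\ge u_{-}$ then gives $u_{-}\le 0$, and together with $u_{-}\ge 0$ this concludes $u_{-}=0$ a.e.\ on $M$.

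The main obstacle I anticipate lies in executing the regularity and Liouville steps \emph{in the absence of curvature restrictions}. All previous positive results on the BMS conjecture relied on a Davies-type argument built on cut-off functions with a controlled Laplacian, which in turn demands at least a lower Ricci bound. The paper's way of bypassing this is the smooth monotonic approximation of distributional solutions of $\Delta u\ge\lambda(x)u$ announced in Section~\ref{sect:approx}. The technical heart of the proof will therefore be to construct this approximation so that it is simultaneously strong enough to yield $W^{1,2}_{loc}$ regularity for non-negative subharmonic distributions and flexible enough to pass through the test-function manipulations underlying both the Brezis--Kato inequality and the Liouville theorem, \emph{purely from completeness of $(M,g)$}.
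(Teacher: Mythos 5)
Your proposal follows essentially the same architecture as the paper: apply a distributional Brezis--Kato inequality to pass from $(-\Delta+1)u\ge 0$ to the statement that $u_{-}=(-u)_{+}$ is a non-negative $L^{p}$ subharmonic distribution with $\Delta u_{-}\ge u_{-}$, then conclude via an $L^{p}$-Liouville theorem for non-negative subharmonic distributions on complete manifolds (this is the paper's Lemma~\ref{lemma-Lp-Sub} combined with Theorem~\ref{prop:Lp-Sub}), with the smooth monotonic approximation of Section~\ref{sect:approx} doing the real work underneath. One small misattribution: you assign to the Brezis--Kato inequality (Section~\ref{sec:Kato}) the job of ``boosting integrability'' for $p\ne 2$, but in the paper that role is already played by the regularity theorem (Theorem~\ref{prop:reg positive subharm}), which gives not merely $u_{-}\in W^{1,2}_{loc}$ but the stronger $u_{-}^{p/2}\in W^{1,2}_{loc}$ for \emph{every} $p\in(1,\infty)$ — exactly what the Yau-type Caccioppoli argument in the Liouville step needs; the Brezis--Kato inequality is used only once, to convert $\CL u\ge 0$ into subharmonicity of the positive part, which you in fact already do in your opening paragraph.
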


As a metter of fact, a bypass product of our approach is that a complete manifold is $\C$-Positivity Preserving where
\[
\C = \{ u \in L^{1}_{loc}(M): \| u \|_{L^{p}(B_{2R}\setminus B_{R})} = o(R^{2/p}) \text{ as }R \to +\infty \},
\]
where $p\in (1,+\infty)$.
This fact will be pointed out in Remark \ref{rem-subquadratic}.\smallskip

We note explicitly that, in the statement of Theorem \ref{th:BMS}, the endpoint cases $p=1$ and $p=+\infty$ are excluded. If, on the one hand, it is well known that there are complete Riemannian manifolds which are not $L^{\infty}$-Positivity Preserving (since stochastically incomplete), on the other hand we don't know whether or not the conclusion of the Theorem can be extended to $p=1$. See also Remark \ref{rmk-Liouville}.


\section{Smooth  approximation of distributional subsolutions}\label{sect:approx}

In the classical potential theory for the Euclidean Laplacian in $\rr^{n}$ (and therefore on any $2$-dimensional manifold in isothermal local coordinates) it is known that subharmonic distributions are the monotone limit of smooth subharmonic functions. We need to extend this property to the locally uniformly elliptic operator
 \[
 \CL = \Delta - \l(x)
 \]
on a Riemannian manifold, $\l(x)$ being a smooth function. Actually, in view of our purposes, we could even assume that either $\l(x) = 1$ or $\l(x) =0$. We have the following

\begin{theorem}\label{th:smoothing}
	Let $(M,g)$ be an open manifold. For any $0\le \lambda \in C^\infty (M)$, the operator $\CL=\Delta-\lambda(x)$ has the  property of  local smooth monotonic approximation of $L^{1}_{loc}$-subsolutions, i.e the following holds.\\
Every $x_{0}\in M$ has two open neighborhoods $\Omega'\Subset \Omega \Subset M$ such that,  if $u \in L^{1}(\Omega)$ solves $\CL u \geq 0$ in $\Omega$, then there exists  a sequence $\{ u_{k} \} \subseteq C^{\infty}(\overline \Omega')$ satisfying the following properties:
\begin{enumerate}
 \item [a)] $u \leq u_{k+1}  \leq u_{k}$ for all $k \in \nn$;
 \item [b)] $u_{k}(x) \to u(x)$ as $k \to +\infty$ for a.e. $x \in \Omega'$;
 \item [c)] $\CL u_{k} \geq 0$ in $\Omega'$ for all $k \in \nn$;
 \item [d)] $\| u - u_{k} \|_{L^{1}(\Omega')} \to 0$ as $k\to +\infty$.
\end{enumerate}
\end{theorem}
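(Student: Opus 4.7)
The plan is to regularize $u$ by running it through the Dirichlet heat semigroup of $\CL$ on a small relatively compact neighborhood of $x_{0}$. The key virtues of this smoothing are threefold: it produces $C^{\infty}$ functions out of $L^{1}$ data; because $\CL$ commutes with its own semigroup, the distributional subsolution inequality $\CL u \geq 0$ is preserved; and the resulting curve $t \mapsto P_{t}u$ is monotone non-decreasing in $t$, so evaluating it at a sequence $t_{k}\downarrow 0$ produces an approximation that decreases to $u$ \emph{from above}, which is precisely the direction required by (a).

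Concretely, the setup is to pick $\Omega = B_{2r}(x_{0})$ a small geodesic ball with smooth boundary and $\Omega' = B_{r}(x_{0}) \Subset \Omega$. Since $\lambda \geq 0$, the Friedrichs realization of $\CL$ on $L^{2}(\Omega)$ with Dirichlet boundary conditions is a non-positive self-adjoint operator generating a sub-Markovian smoothing semigroup $(P_{t})_{t\geq 0}$ with smooth non-negative kernel $p_{t}(x,y)$, and $(P_{t})$ extends to a strongly continuous contraction semigroup on $L^{1}(\Omega)$. The candidate approximation is
\[
u_{k} := P_{t_{k}} u \in C^{\infty}(\overline{\Omega'}),
\]
for a fixed decreasing sequence $t_{k}\downarrow 0$.

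To check property (c), the formal computation
\[
\int_{\Omega}\vp \,\CL u_{k}\dv = \int_{\Omega} u_{k}\,\CL\vp \dv = \int_{\Omega} u\,\CL(P_{t_{k}}\vp)\dv \geq 0,\qquad 0 \leq \vp \in C_{c}^{\infty}(\Omega),
\]
uses self-adjointness of $P_{t_{k}}$, the commutation $P_{t_{k}}\CL = \CL P_{t_{k}}$ on test functions, the pointwise non-negativity $P_{t_{k}}\vp \geq 0$ granted by the positivity of the heat kernel, and the distributional hypothesis $\CL u \geq 0$. Since $\CL u_{k}$ is smooth, this yields $\CL u_{k} \geq 0$ pointwise on $\Omega$, whence on $\Omega'$. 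Applying the same argument with $P_{\tau}u$ in place of $u$ gives $\partial_{\tau}P_{\tau}u = \CL P_{\tau}u \geq 0$ for $\tau > 0$, so $\tau \mapsto P_{\tau}u$ is non-decreasing; combined with strong $L^{1}$-continuity at $\tau = 0$, this yields $u \leq u_{k+1}\leq u_{k}$ a.e.\ on $\Omega$, which is (a). Strong $L^{1}$-continuity also gives (d), and (b) follows since a monotone sequence converging in $L^{1}$ converges almost everywhere.

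The main technical hurdle will be justifying the central identity displayed above when $\CL u$ is only a positive Radon measure and $P_{t_{k}}\vp$, although smooth and vanishing on $\partial\Omega$, is \emph{not} compactly supported in $\Omega$. The hypothesis on $u$ is a priori available only for test functions in $C_{c}^{\infty}(\Omega)$; extending it to $\psi = P_{t_{k}}\vp$ requires approximating $\psi$ by cut-offs $\chi_{j}\psi \in C_{c}^{\infty}(\Omega)$ with $\chi_{j}\uparrow 1$ on compact subsets of $\Omega$, expanding $\CL(\chi_{j}\psi) = \chi_{j}\CL\psi + \psi\CL\chi_{j} + 2\langle\nabla\chi_{j},\nabla\psi\rangle$, and showing that the commutator terms vanish in the limit when tested against $u$. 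This in turn uses the boundary behavior $\psi = 0$ on $\partial\Omega$ (with Lipschitz decay) together with $u \in L^{1}(\Omega)$, and is exactly the reason why the conclusion is only available on the strictly smaller set $\Omega' \Subset \Omega$ rather than on $\Omega$ itself.
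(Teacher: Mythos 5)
Your semigroup approach is conceptually different from the paper's, which conjugates $\CL$ to a weighted Laplacian $\Delta_{\alpha}$ via the Protter--Weinberger substitution $v=u/\alpha$ (with $\alpha>0$ a local $\CL$-harmonic function), reads $\Delta_{\alpha}v\ge0$ in local coordinates as a divergence-form inequality $\partial_i(a^{ij}\partial_j v)\ge0$, and then invokes the axiomatic potential-theoretic construction of a smooth decreasing approximation due to Bonfiglioli--Lanconelli (after Sj\"ogren and Herv\'e). That construction lives entirely \emph{inside} the coordinate ball and never needs to test $u$ against anything that fails to be compactly supported, which is precisely where your argument runs into trouble.

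The gap is in your step (c), and it is not a technicality you can defer. Everything hinges on the identity
\[
\int_{\Omega} u_{k}\,\CL\vp = \int_{\Omega} u\,\CL\bigl(P_{t_{k}}\vp\bigr)\ \geq\ 0,
\]
but the hypothesis $\CL u\ge 0$ is a statement about test functions $\vp\in C^{\infty}_{c}(\Omega)$, while $\psi:=P_{t_{k}}\vp$ merely vanishes \emph{on} $\partial\Omega$ without being compactly supported \emph{in} $\Omega$. Your proposed fix --- cut off with $\chi_{j}\uparrow 1$ and control the commutator
\[
\CL(\chi_{j}\psi)-\chi_{j}\CL\psi = \psi\,\Delta\chi_{j}+2\,g(\nabla\chi_{j},\nabla\psi)
\]
when paired with $u$ --- does not close. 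On a boundary layer $A_{j}$ of thickness $d_{j}$ one has $|\Delta\chi_{j}|\sim d_{j}^{-2}$, $|\nabla\chi_{j}|\sim d_{j}^{-1}$, $|\psi|\lesssim d_{j}$ and $|\nabla\psi|\lesssim1$, so both commutator terms are $O(d_{j}^{-1})$ pointwise, giving the bound
\[
\Bigl|\int_{A_{j}} u\bigl(\psi\Delta\chi_{j}+2g(\nabla\chi_{j},\nabla\psi)\bigr)\Bigr|\ \lesssim\ \frac{\|u\|_{L^{1}(A_{j})}}{d_{j}}.
\]
The assumption $u\in L^{1}(\Omega)$ gives $\|u\|_{L^{1}(A_{j})}\to 0$ but with no rate, so this need not tend to zero: an $L^{1}$-subsolution may concentrate near $\partial\Omega$. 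Since your monotonicity claim (a) is derived from $\partial_{\tau}P_{\tau}u=\CL P_{\tau}u\ge0$, i.e.\ from the very inequality you are trying to justify, properties (a) and (c) both fail to be established. One could try shrinking to a Dirichlet semigroup on an intermediate domain $\Omega''$ with $\Omega'\Subset\Omega''\Subset\Omega$, but then $P^{\Omega''}_{t}\vp$ extended by zero is only Lipschitz across $\partial\Omega''$ and $\CL$ of it carries a surface measure of the wrong sign, so the obstruction simply reappears at $\partial\Omega''$. In short: the semigroup idea is attractive because it automatically produces a decreasing flow, but making the subsolution property survive Dirichlet boundary conditions requires boundary decay of $u$ that the mere assumption $u\in L^{1}(\Omega)$ does not supply; the paper sidesteps this entirely by working with the Green potential representation of $\L$-subharmonic functions.
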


\begin{remark}
	In view of future applications it would be very interesting to weaken the assumptions of Theorem \ref{th:smoothing}, by allowing $\lambda$ to be singular. In that case we expect the $\CL$-subharmonic functions $u_k$ in the approximating sequence to be non-smooth, but still more regular than $u$. 
\end{remark}

\begin{proof}
We preliminarily observe that property d) follows immediately from a) and b). Indeed, since $| u_{k}| \leq \max ( |u_{1}| , |u|)$ we can apply the dominated convergence theorem to conclude. Therefore, we shall concentrate on the construction of a sequence of functions satisfying  properties a), b) and c).\smallskip

Fix $x_0\in M$ and a smooth open neighborhood $B\Subset M$ of $x_0$. Let $\alpha\in C^\infty(M)$ be  a solution of
\[
\begin{cases}
	\CL \alpha = 0,\\
    \alpha >0,
\end{cases}\text{on }B.
\]	
Thanks to the maximum principle, such an $\alpha$ can be obtained for instance as a solution of the Dirichlet problem $\CL \alpha=0$ with positive constant boundary data on $\partial B$. We shall use the following trick introduced by M.H. Protter and H.F. Weinberger in \cite{PW}.
\begin{lemma} The function $w\in L^1_{loc}(M)$ is a distributional solution of $\CL w\ge 0$ on $B$ if and only if $w/\alpha$ is a distributional solution of $\Delta_\alpha (w/\alpha):=\alpha^{-2}\dive(\alpha^2 \nabla(w/\alpha)) \ge 0$ on $B$.
\end{lemma}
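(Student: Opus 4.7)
The plan is to reduce the statement to the pointwise identity
\[
\CL(\alpha \psi) \;=\; \alpha^{-1}\dive(\alpha^2 \nabla \psi) \;=\; \alpha\,\Delta_\alpha \psi \qquad \text{for every smooth } \psi \text{ on } B,
\]
and then to transfer it to the distributional level by means of the bijective change of test functions $\varphi = \alpha\psi$.

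The first step is a direct pointwise computation. By the Leibniz rule for $\Delta$,
\[
\CL(\alpha \psi) \;=\; \Delta(\alpha \psi) - \lambda \alpha \psi \;=\; \psi\,\CL \alpha + \alpha \Delta \psi + 2\, g(\nabla \alpha, \nabla \psi),
\]
and the first summand vanishes because $\CL \alpha = 0$ on $B$. On the other hand, $\dive(\alpha^2 \nabla \psi) = 2\alpha\, g(\nabla \alpha, \nabla \psi) + \alpha^2 \Delta \psi$, which upon division by $\alpha$ yields exactly the same expression. This proves the identity.

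The second step is a duality argument. Since $\alpha \in C^\infty(B)$ is strictly positive, and hence bounded away from zero on any compact subset of $B$, multiplication by $\alpha$ is a bijection of $C^\infty_c(B)$ onto itself that preserves the cone of non-negative functions, with inverse $\varphi \mapsto \varphi/\alpha$. Pairing the pointwise identity with $w \in L^1_{loc}(B)$ against the Riemannian measure yields, for every $\psi \in C^\infty_c(B)$,
\[
\int_B w\, \CL(\alpha \psi)\, \dv \;=\; \int_B (w/\alpha)\, \dive(\alpha^2 \nabla \psi)\, \dv \;=\; \int_B (w/\alpha)\,(\Delta_\alpha \psi)\, \alpha^2 \dv,
\]
all integrals being finite because $w \in L^1_{loc}(B)$ and $\alpha$, $\alpha^{-1}$ are smooth on the support of $\psi$. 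Letting $\psi$ range over all non-negative elements of $C^\infty_c(B)$, the left-hand side tests the distributional inequality $\CL w \geq 0$ through the general non-negative test function $\varphi := \alpha \psi$, while the right-hand side tests the distributional inequality $\Delta_\alpha(w/\alpha) \geq 0$ in the natural weighted formulation. The equivalence follows.

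The only delicate point, and essentially the main obstacle, is to fix the correct convention for distributional $\Delta_\alpha$-subharmonicity: since $\Delta_\alpha$ is formally self-adjoint with respect to the weighted measure $\alpha^2\,\dv$ rather than $\dv$, the proper duality pairing involves the weight $\alpha^2$. Once this is granted, the proof reduces to the one-line substitution above and there are no further technical difficulties.
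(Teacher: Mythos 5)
Your proof is correct and follows essentially the same approach as the paper: both establish the pointwise identity $\CL(\alpha\psi)=\alpha\,\Delta_\alpha\psi$ (the paper writes it as $\alpha\,\Delta_\alpha(\varphi/\alpha)=\CL\varphi$ and derives it via $\dive(\alpha\nabla\varphi-\varphi\nabla\alpha)$, while you use the Leibniz rule directly, a cosmetic difference) and then transfer it to the distributional level through the positivity-preserving bijection $\psi\mapsto\alpha\psi$ of $C^\infty_c(B)$, using the $\alpha^2\dv$-weighted pairing for $\Delta_\alpha$.
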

\begin{proof}
	Let $0\leq \vp \in C^\infty_c(B)$. Since $\Delta\alpha = \lambda \alpha$, we have
	\begin{align*}
	\alpha \Delta_\alpha(\vp/\alpha) &= \alpha^{-1}\dive (\alpha^2 \nabla(\vp/\alpha))\\
	&= \alpha^{-1} \dive(\alpha\nabla \vp - \vp\nabla \alpha)\\
	&= \alpha^{-1} (\alpha\Delta \vp - \vp\Delta \alpha)\\
    &= \CL \vp.
	\end{align*}	
	Noticing that the operator
$\Delta_{\alpha}$ is symmetric with respect to the smooth weighted measure $\alpha^2\dv$, we get	
	\begin{align*}
		(\Delta_\alpha \frac{w}{\alpha}, \alpha\vp) = \int_{M} \frac{w}{\alpha}\Delta_{\alpha}\frac{\vp}{\alpha} \alpha^2\dv =
		\int_{M} \frac{w}{\alpha}\alpha^2\frac{\CL\vp}{\alpha} \dv =  
		\int_{M} w \CL\vp \dv = (\CL w,\vp). 
	\end{align*}
Since $0\le \alpha\vp \in C^\infty_c(B)$, this concludes the proof of the lemma.
\end{proof}
According to this lemma, setting $v=\alpha^{-1}u$, we can infer the conclusion of the theorem from the equivalent statement for the operator $\Delta_\alpha$. Namely, we \emph{claim} that if $v\in L^1_{loc}(B)$ solves $\Delta_\alpha v \geq 0$ distributionally in $B$, then there exists a smaller neighborhood $B'\Subset B$ of $x_0$ and a sequence $\{ v_{k} \} \subseteq C^{\infty}(\overline B')$ satisfying the following properties:
\begin{enumerate}
	\item [a')] $v \leq v_{k+1}  \leq v_{k}$ for all $k \in \nn$;
	\item [b')] $v_{k}(x) \to v(x)$ as $k \to +\infty$ for a.e. $x \in B'$;
	\item [c')] $\Delta_\alpha v_{k} \geq 0$ in $B'$ for all $k \in \nn$.
\end{enumerate}
Indeed, if we set $\Omega'=B'$, then a'), b') and c') are equivalent to a), b) and c), respectively. 

To prove the claim, choose a chart $\phi:B\to \phi(B)=:\mathbb B\subset \mathbb R^n$ in the differential structure of $M$ and a coordinate system in $\mathbb B$. With respect to this coordinate system $\Delta_\alpha$ writes as
\[
\frac{\alpha^{-2}}{\sqrt{|g|}}\partial_i(\sqrt{|g|} \alpha^2 g^{ij}\partial_j).
\]
Accordingly, the distributional inequality $\Delta_\alpha v \ge 0$ is equivalent to the divergence form inequality
\[
\L v : = \partial_i(a^{ij}\partial_j) v \ge 0\qquad\text{distributionally},
\]
where $[a^{ij}] = [\sqrt{|g|} \alpha^2 g^{ij}]$ is a positive definite symmetric matrix whose entries depend smoothly on $x\in\mathbb B$.  According to \cite[Theorem 1]{Sj}, $v$ is an $\L$-subharmonic function in the sense of the potential theory, cf. \cite{He}. Hence, to conclude, we can apply a slightly modified version of \cite[Theorem 7.1]{BL}. Namely, one can verify that Theorem 7.1 works without any change if one uses in its proof the Green function of $\bb$ with null boundary conditions on $\partial \bb$. The existence of this Green function, in turn, can be deduced using different methods. For instance, from the PDE viewpoint,  we can appeal to the fact that the Dirichlet problem on $\bb$ is uniquely solvable; see the classical \cite{LSW}.

The proof of the theorem is completed.
\end{proof}

\begin{remark}
	The assumption $\lambda \ge 0$ in Theorem \ref{th:smoothing} can be avoided up to taking a small enough neighborhood of $x_0$. Indeed, suppose that $\lambda(x_0)<0$. Since the $M$ is asymptotically Euclidean in $x_0$, the constant in the Poincaré inequality can be made arbitrarily small on small domain. Namely, there exists a small enough neighborhood $U$ of $x_0$ so that 
	\[
 \int_{U} -\lambda \vp^2 \dv \le	-\frac{\lambda(x_0)}{2}\int_{U} \vp^2 \dv\le  \int_U |\nabla \vp|^2 \dv ,\quad \forall \vp\in C^\infty_c(U),\] i.e., the bottom of the spectrum $\lambda^U_1(-\CL)$ of $-\CL=-\Delta+\lambda$ on $U$ is non-negative. By the monotonicity of $\lambda^U_1(-\CL)$ with respect to the domain, we thus obtain that $\lambda^\Omega_1(-\CL)>0$ on some smaller domain $\Omega\Subset U$. Accordingly, there exists a strictly positive solution of $\CL \alpha =0$ on $\Omega$.     
\end{remark}

In the following, we will assume that either
$\l(x) \equiv 0$ or $\l(x) = \l$ is a positive constant. For each of these choices of $\l$, the existence of a smooth local monotone approximation has striking consequences in the regularity theory of  subharmonic distributions or on the validity of a variant of the traditional {\it Kato inequality}.


In the next two sections we are going to analyze separately these applications.

\section{Improved regularity of positive subharmonic distributions}\label{sect:reg}

By a {\it subharmonic distribution} on a domain $\Omega \subseteq M$ we mean a function $u \in L^{1}_{loc}(
\Omega)$ satisfying the inequality $\Delta u \geq 0$ in the sense of distributions. Note that, in this case, $\Delta u$ is a positive Radon measure.

Using the fact that a local monotone approximation exists, positive subharmonic distributions are necessarily in $W^{1,2}_{loc}$. Indeed an even stronger property can be proved, i.e., $u^{p/2}\in W^{1,2}_{loc}$ for all $p\in (1,\infty)$. This is the content of the next Theorem. To the best of our knowledge, in this generality the result is new also in the Euclidean setting.

\begin{theorem}\label{prop:reg positive subharm}
	Let $(M,g)$ be a Riemannian manifold. Let $u \geq 0$ be an $L^1_{loc}(M)$-subharmonic distribution. Then, $u\in L^\infty_{loc}$ and $u^{p/2}\in W^{1,2}_{loc}$ for any $p\in (1,\infty)$. 
\end{theorem}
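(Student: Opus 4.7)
The strategy is to apply Theorem \ref{th:smoothing} with $\lambda\equiv 0$ and deduce the regularity of $u$ from classical \emph{a priori} estimates on the smooth subharmonic approximants $u_k$, passing afterwards to the limit by monotone/dominated convergence and weak compactness.

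Fix $x_0\in M$ and let $\Omega'\Subset \Omega$ be the neighborhoods given by Theorem \ref{th:smoothing}, which produces a sequence $\{u_k\}\subseteq C^\infty(\overline{\Omega'})$ with $0\le u\le u_{k+1}\le u_k$ a.e., $\Delta u_k\ge 0$ classically on $\Omega'$, and $u_k\to u$ a.e. Fix $\Omega''\Subset \Omega'$. In any local coordinate chart, $\Delta$ is a uniformly elliptic operator in divergence form with smooth coefficients, so the classical De Giorgi--Nash--Moser $L^1$-to-$L^\infty$ mean value inequality applied to the non-negative smooth subsolutions $u_k$ produces a constant $C=C(\Omega'',\Omega')>0$ with
\[
\sup_{\Omega''}u_k \le C \|u_k\|_{L^1(\Omega')}\le C \|u_1\|_{L^1(\Omega')},
\]
a bound independent of $k$. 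Monotone convergence then yields $u\in L^\infty(\Omega'')$ with the same bound.

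For the Sobolev regularity, fix $p\in(1,\infty)$, $\epsilon>0$, and a cutoff $\eta\in C^\infty_c(\Omega')$ with $\eta\equiv 1$ on $\Omega''$. Test $\Delta u_k\ge 0$ against the smooth non-negative function $\eta^2(u_k+\epsilon)^{p-1}$, integrate by parts, and apply the elementary Cauchy--Schwarz and Young inequalities to obtain the Caccioppoli-type bound
\[
\int_{\Omega'} \eta^2 |\nabla (u_k+\epsilon)^{p/2}|^2\dv \le \frac{p^2}{(p-1)^2} \int_{\Omega'} (u_k+\epsilon)^p |\nabla \eta|^2\dv.
\]
Since $u_k\ge 0$ is smooth, every zero of $u_k$ is a local minimum, so $\nabla u_k=0$ on $\{u_k=0\}$, and hence $\nabla (u_k+\epsilon)^{p/2}\to \nabla u_k^{p/2}$ pointwise as $\epsilon\to 0^+$; Fatou on the left and dominated convergence on the right (here the $L^\infty$ bound from the previous step is used) yield the corresponding estimate with $u_k$ in place of $u_k+\epsilon$ and with right-hand side uniformly bounded in $k$. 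In particular, $\sup_k\|u_k^{p/2}\|_{W^{1,2}(\Omega'')}<\infty$.

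To conclude, the $L^\infty$ bound and dominated convergence give $u_k^{p/2}\to u^{p/2}$ in $L^2(\Omega'')$, while the uniform $W^{1,2}$-bound together with weak compactness and uniqueness of the $L^2$-limit force $u^{p/2}\in W^{1,2}(\Omega'')$; a standard covering argument yields the local statement on all of $M$. The main technical subtlety I anticipate is the range $p\in(1,2)$, in which the term $u^{p-2}$ would be singular at the zeros of $u$ and the chain rule for $u^{p/2}$ is not immediate; the $\epsilon$-regularization keeps every computation classical and, crucially, reduces the only non-routine point to the elementary vanishing of $\nabla u_k$ on the zero set of the smooth non-negative $u_k$.
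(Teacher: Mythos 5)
Your proof is correct and follows essentially the same route as the paper: invoke Theorem \ref{th:smoothing} with $\lambda\equiv 0$ to get smooth monotone approximants $u_k\searrow u$, test $\Delta u_k\ge 0$ against $u_k^{p-1}\varphi^2$ to obtain the Caccioppoli bound $\int\varphi^2|\nabla u_k^{p/2}|^2\lesssim \int u_1^p|\nabla\varphi|^2$, and conclude by weak $W^{1,2}$-compactness and uniqueness of the a.e.\ limit. Two small remarks: the $L^\infty_{loc}$ bound is immediate from $0\le u\le u_1$ a.e.\ with $u_1\in C^\infty(\overline{\Omega'})$, so the De~Giorgi--Nash--Moser step is unnecessary; and the paper avoids your $\epsilon\to 0^+$ limit (and in particular the not-entirely-routine justification that $u_k^{p/2}$ is weakly differentiable near $\{u_k=0\}$ when $1<p<2$, which ultimately rests on a Glaeser-type bound $|\nabla u_k|^2\lesssim u_k$ for smooth non-negative functions) by replacing $u_k$ with the still-subharmonic, still-monotone $u_k+1/k$, so every approximant is uniformly positive on $\overline{\Omega'}$ and the chain rule is classical throughout.
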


\begin{proof}
Fix $1 < p < +\infty$ and  let $x_{0} \in M$ be a given point. Let $\Omega\Subset M$ be a neighborhood of $x_{0}$ where it is defined a smooth  approximation of $u$
\[
u_{k} \geq u_{k+1} \geq u \geq 0
\]
by (classical) solutions of $\Delta u_{k} \geq 0$. Note that, up to replacing $u_k$ with $u_k+1/k$, we can suppose that  $u_k>0$. 

Now, let
	$\vp\in C^\infty_c(\Omega)$ to be a cut-off function such that $0\le \vp\le 1$ and $\vp\equiv 1$ on some open set $\Omega_{1}\Subset \Omega$, 
	and define $\psi = u_k^{p-1} \vp^{2}$.
	Since the $u_k$'s are smooth, strictly positive and subharmonic, we obtain
	\begin{align*}
		0 &\geq -\int_M \Delta u_k \psi \\
		& = \int_M g(\nabla u_k,\nabla \psi)\\
		&\geq (p-1)\int_{M} \vp^{2}u_k^{p-2}|\nabla u_k |^{2} + 2\int_{M} \vp u_k^{p-1}g(\nabla u_k,\nabla \vp)\\
		&\ge (p-1-\ve)\int_{M} \vp^{2}u_k^{p-2}|\nabla u_k |^{2} - \ve^{-1} \int_{M} u_k^{p}|\nabla \vp |^{2},
	\end{align*}
	for any $\ve \in (0,p-1)$. Elaborating, we get the Caccioppoli inequality
	\begin{align*}
		\ve(p-1-\ve)\int_{\Omega_{1}} u_k^{p-2}|\nabla u_k|^{2}
		&\leq	\ve(p-1-\ve)\int_{M} \vp^2 u_k^{p-2}|\nabla u_k|^{2}\\ &\leq \int_{M} u_k^{p}|\nabla \vp|^{2}\\
		&\leq \|\nabla \vp\|_\infty^2\int_{\Omega} u_k^{p}.
	\end{align*}
	Using the fact that $u_k\ge u_{k+1}> 0$, this latter implies
	\[
	\int_{\Omega_{1}} |\nabla u_k^{p/2}|^{2}=\frac{p^2}{4}	\int_{\Omega_{1}} u_k^{p-2}|\nabla u_k|^{2}\leq	\frac{p^2\|\nabla \vp\|_\infty^2}{4\ve(p-1-\ve)} \int_{\Omega} u_1^{p},
	\]
	Noticing also that $\|u_k^{p/2}\|_{L^2(\Omega_{1})}\le \|u_1^{p/2}\|_{L^2(\Omega_{1})}$, we have thus obtained that the sequence $\{u_k^{p/2}\}$ is uniformly bounded in $W^{1,2}(\Omega_{1})$, hence weakly converges in $W^{1,2}(\Omega_{1})$ (up to extract a subsequence) to some $v \in W^{1,2}(\Omega_{1})$. Since $u_k^{p/2}$ converges point-wise a.e. to $u^{p/2}$, it holds necessarily $v =u^{p/2}$ a.e. on $\Omega_{1}$.  In particular, $u^{p/2}\in W^{1,2}$ in a neighborhood of $x_{0}$.
\end{proof}


\section{A variant of the Kato inequality}\label{sec:Kato}

The original inequality by T. Kato, \cite[Lemma A]{Ka}, states that if $u \in L^{1}_{loc}(M)$ satisfies $\Delta u \in L^{1}_{loc}(M)$ then
\[
\Delta | u | = \sgn(u) \Delta u
\]
or, equivalently, if we set $u_{+} = \max(u,0) = (u + |u|)/2$, it holds
\[
\Delta u_{+} = 1_{\{ u>0\}} \Delta u.
\]
Note that, in these assumptions, $|\nabla u| \in L^{1}_{loc}(M)$ (see \cite[Lemma 1]{Ka}) and, therefore, $u \in W^{1,1}_{loc}(M)$.

Under the sole requirement that $u \in W^{1,1}_{loc}(M)$ is such that $\Delta u = \mu$ is a (signed) Radon measure, a precise form of the Kato inequality was proved by A. Ancona in \cite[Theorem 5.1]{An}.

In the special case where $M = \rr^{n}$ and $\Delta$ is the Euclidean Laplacian,  H. Brezis, \cite[Lemma A.1]{Br} and  \cite[Proposition 6.9]{Po}, observed that the local regularity of the function can be replaced by the condition that $u$ satisfies a differential inequality of the form $\Delta u \geq f$ in the sense of distributions, where $f \in L^{1}_{loc}$. The proof uses standard mollifiers to approximate $u$ by smooth solutions of the same inequality and, in particular, it works locally on $2$-dimensional Riemannian manifolds thanks to the existence of isothermal  coordinates. For higher dimensional manifolds the proof can be carried out by using the existence of a sequence of smooth approximations. In view of our purposes we limit ourselves to point out the following

\begin{proposition}[Brezis-Kato inequality]\label{prop-BK}
Let $(M,g)$ by any Riemannian manifold. If $u \in L^{1}_{loc}(M)$ satisfies $\CL u  \geq 0$, then $u_{+} \in L^{1}_{loc}(M)$ is a distributional solution of the same inequality, i.e., $\CL u_{+} \geq 0$.
\end{proposition}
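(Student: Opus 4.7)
The natural plan is to reduce to the smooth case via the approximation machinery of Theorem \ref{th:smoothing}, apply the classical Kato inequality to smooth subsolutions, and then pass to the limit using the monotone structure. First, I would fix $x_0 \in M$ and invoke Theorem \ref{th:smoothing} for the operator $\CL$ (recall $\lambda \equiv 0$ or $\lambda$ is a positive constant) to produce neighborhoods $\Omega' \Subset \Omega$ of $x_0$ and a sequence $\{u_k\} \subset C^\infty(\overline{\Omega'})$ with $u \leq u_{k+1} \leq u_k$ a.e., $u_k \to u$ pointwise a.e., and $\CL u_k \geq 0$ \emph{classically} on $\Omega'$. Because the target inequality is purely local, the global conclusion will follow from a partition of unity argument at the end.

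Next I would establish the pointwise Kato inequality for each smooth $u_k$. The cleanest route is to approximate $|t|$ by $\phi_\varepsilon(t) = \sqrt{t^2 + \varepsilon^2}$, which is smooth and convex, and to observe that
\[
\Delta \phi_\varepsilon(u_k) = \phi_\varepsilon''(u_k)|\nabla u_k|^2 + \phi_\varepsilon'(u_k)\Delta u_k \geq \phi_\varepsilon'(u_k)\Delta u_k.
\]
Since $\phi_\varepsilon(u_k) \to |u_k|$ uniformly on compacts and $\phi_\varepsilon'(u_k) \to \sgn(u_k)$ boundedly a.e.\ as $\varepsilon \to 0$, one obtains distributionally $\Delta |u_k| \geq \sgn(u_k)\Delta u_k$, and averaging with $u_k$ yields $\Delta (u_k)_+ \geq \mathbf{1}_{\{u_k > 0\}}\Delta u_k$. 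Combined with $\Delta u_k \geq \lambda u_k$ and the trivial identity $\mathbf{1}_{\{u_k > 0\}}\lambda u_k = \lambda (u_k)_+$ (valid because $\lambda \geq 0$), this gives $\CL (u_k)_+ \geq 0$ on $\Omega'$.

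Finally I would pass to the limit. Since $t \mapsto t_+$ is monotone non-decreasing, $(u_k)_+ \downarrow u_+$ a.e.\ on $\Omega'$ with $0 \leq (u_k)_+ \leq (u_1)_+ \in L^1(\Omega')$. Then for every test function $0 \leq \vp \in C^\infty_c(\Omega')$, the function $\CL\vp$ is smooth and compactly supported, so $|(u_k)_+ \,\CL\vp| \leq (u_1)_+ \,\|\CL\vp\|_\infty\,\mathbf{1}_{\supp \vp}$ and dominated convergence yields
\[
\int_{\Omega'} u_+ \CL\vp \dv = \lim_{k\to\infty}\int_{\Omega'} (u_k)_+ \CL\vp \dv \geq 0,
\]
showing that $\CL u_+ \geq 0$ in a neighborhood of $x_0$. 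Globalising via a smooth partition of unity subordinate to a cover of $\supp \vp$ by such neighborhoods completes the proof; in particular $u_+ \in L^1_{loc}(M)$ follows automatically from the domination $(u_k)_+ \leq (u_1)_+$.

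The only nontrivial step is the pointwise Kato inequality for the smooth approximants $u_k$; once this is in place, everything else is routine because Theorem \ref{th:smoothing} already provides the built-in monotone $L^1$-dominated approximation one needs to interchange limit and distributional pairing. No curvature or completeness hypothesis enters, in line with the fully local nature of the statement.
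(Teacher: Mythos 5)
Your argument is correct and rests on the same two ingredients as the paper's proof: the monotone smooth approximation from Theorem \ref{th:smoothing} and a $\sqrt{t^2+\varepsilon}$-type regularization of the positive part. The only structural difference is the order in which the two limits are taken. You first send $\varepsilon\to 0$ with $k$ fixed, which amounts to proving the classical distributional Kato inequality $\Delta(u_k)_+ \geq \mathbf{1}_{\{u_k>0\}}\Delta u_k$ for each smooth approximant, and then send $k\to\infty$ using the monotone domination $(u_k)_+\le (u_1)_+$. The paper instead fixes $\varepsilon$, passes $m\to\infty$ using only the $L^1$-convergence $\|u_m-u\|_{L^1}\to 0$, and then sends $\varepsilon\to 0$; in this way it uses only properties c) and d) of Theorem \ref{th:smoothing}, which the authors explicitly remark upon afterward (properties a) and b) being reserved for the regularity result of Section \ref{sect:reg}). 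Your version is arguably more modular, at the price of also invoking the monotonicity a) and pointwise convergence b); either route is valid, and the partition-of-unity globalization at the end is identical.
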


\begin{remark}
 In Appendix \ref{section-AnconaKato} we will give an alternative proof of the Brezis-Kato inequality, by deriving it from Ancona's work alluded to above.
\end{remark}

\begin{proof}
 Let $u_{m}$ be a sequence of  monotonic approximation of $u$ by smooth solutions of $\Delta u_{m} \geq \l  u_{m}$ on a domain $V$.
 
Now, let $H:\mathbb R \to \mathbb R$ be a smooth convex function. For any $m\ge 1$,
\begin{align*}
	\Delta (H (u_m)) \ge H'(u_m)\l u_m.
\end{align*}
Therefore, for every $0\le \vp\in C^\infty_c(V)$,
\begin{equation}\label{eq:Hve}
\int_{V} H(u_m)\Delta \vp \dx = \int_{V} \Delta (H(u_m))\vp \dx 
\ge \int_{V} \l u_m H'(u_m)\vp \dx.
\end{equation}
We apply this latter with $H(t)=H_\ve(t)=(t+\sqrt{t^2+\ve})/2$. First, we let $m\to \infty$. 
We get
\[
\left|\int_{V} (H_\ve(u_m)-H_\ve(u)) \Delta \vp \dx\right| \leq \|H_\ve'\|_{L^\infty}\| \Delta\vp\|_{L^\infty}\int_{V} |u_m-u|\dx \longrightarrow 0,
\]
and 
\begin{align*}
&\left|\int_{V} \l u_m H_\ve'(u_m)\vp \dx-\int_{V} \l u H_\ve'(u)\vp \dx\right|\\
\le  &\left|\int_{V} \l (u_m-u) H_\ve'(u_m)\vp \dx\right|+\left|\int_{V} \l u (H_\ve'(u_m)-H_\ve'(u))\vp \dx\right|\\
\le  &\l \|H_\ve'\|_{L^\infty}\|\vp\|_{L^\infty}\int_{V}  |u_m-u|  \dx+\int_{V} \l |u|\, |H_\ve'(u_m)-H_\ve'(u)|\, |\vp| \dx \longrightarrow 0,
\end{align*}
where we used the dominated convergence theorem in the last integral and the $L^1_{loc}$ convergence of the $\{u_m\}$ to $u$ elsewhere. In particular, \eqref{eq:Hve} yields
\[
\int_{V} H_\ve(u) \Delta \vp \dx   
\ge \int_{V} \l uH_\ve'(u)\vp \dx.
\]
Note that $H_\ve(t) \to t_+$ uniformly on $\mathbb R$. Moreover the $H'_\ve(t)$ are uniformly bounded both in $\ve$ and in $t$, and converge pointwise a.e. as $\ve\to 0$ to the characteristic function $\chi_{(0,+\infty)}(t)$. Letting $\ve\to 0$ and applying again the dominated convergence theorem gives
\[
\int_{V} u_+ \Delta\vp \dx   
\ge \int_{V} \l u 1_{\{ u>0 \}}\vp \dx=\int_{V} \l u_+ \vp \dx,
\]
i.e. 
\[
\int_{V} u_+ \Delta\vp \dv   
\ge \int_{V} \l u_+ \vp \dv
\]
for any $0\le\vp\in C^\infty_c(V)$. In order to conclude the proof, take a covering of $M$ by domains $U_{k} \Subset M$ where the monotone approximation exists and consider a subordinated partition of unity $\{\eta_k\}_{k\in K}$ such that $\eta_k\in C^\infty_c(\tilde U_k)$ and $\sum_k\eta_k=1$. Given  $\psi\in C^\infty_c(M)$, one has
\[
\int_{M} u_+ \Delta\psi \dv   
=\sum_k \int_{M} u_+ \Delta(\eta_k\psi) \dv \ge \sum_k \int_{\tilde U_k} \l u_+ \eta_k\psi \dv= \int_{M} \l u_+ \psi \dv.
\]
\end{proof}

\begin{remark}
	We note that only properties c) and d) of the statement of Theorem \ref{th:smoothing} are used in the above proof of the Brezis-Kato inequality. A different approach to obtain such properties could consist in verifying that the approximation theory by R. Greene and H. Wu, \cite{GW}, applies to our operator $\CL$. We are grateful to Batu G\"uneysu for having put this paper to our attention. On the other hand, the monotonicity of the approximating sequence (i.e., property a)) will be vital in the regularity result contained in Section \ref{section_Liouville}, and could reveal useful in future applications.
\end{remark}

A direct application of Theorem \ref{prop:reg positive subharm} and Proposition \ref{prop-BK} gives the following

\begin{corollary}
Let $(M,g)$ be a Riemannian manifold. If $u \in L^{1}_{loc}(M)$ satisfies $\CL u = \Delta u - \l u \geq 0$, with $\l\ge 0$, then $u_{+} \in L^{1}_{loc}(M)$ is a non-negative subharmonic distribution, i.e., $\Delta u_{+} \geq 0$. In particular, $u_{+} \in L^{\infty}_{loc}(M)$ and, for any $p \in (1,+\infty)$, $u_{+}^{p/2} \in W^{1,2}_{loc}(M)$.
\end{corollary}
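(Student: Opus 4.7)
The plan is to chain together the two main results from the preceding sections, so the argument should be essentially a two-line composition with a small bit of non-negativity bookkeeping.

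First, I would invoke the Brezis--Kato inequality of Proposition \ref{prop-BK} directly on $u$: since $\CL u = \Delta u - \l u \geq 0$ distributionally on $M$, we obtain that $u_{+} \in L^{1}_{loc}(M)$ and
\[
\CL u_{+} = \Delta u_{+} - \l u_{+} \geq 0
\]
in the sense of distributions. This gives the distributional inequality $\Delta u_{+} \geq \l u_{+}$.

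Next, I would upgrade this to plain subharmonicity by using the sign information we have for free. Because $u_{+} \geq 0$ pointwise a.e.\ and $\l \geq 0$ by hypothesis, the right-hand side $\l u_{+}$ is a non-negative locally integrable function. Testing against any $0 \le \vp \in C^\infty_c(M)$ we read off
\[
\int_{M} u_{+}\,\Delta \vp \,\dv \;\geq\; \int_{M} \l u_{+}\,\vp \,\dv \;\geq\; 0,
\]
so $u_{+}$ is a non-negative subharmonic distribution on $M$.

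Finally, I would apply Theorem \ref{prop:reg positive subharm} to the non-negative subharmonic distribution $u_{+}$: it yields at once $u_{+}\in L^{\infty}_{loc}(M)$ and $u_{+}^{p/2}\in W^{1,2}_{loc}(M)$ for every $p\in(1,+\infty)$, which are exactly the remaining conclusions. Since each of the three steps is a direct citation, I do not expect any genuine obstacle here; the only point worth writing out carefully is the passage from $\Delta u_{+}\geq \l u_{+}$ to $\Delta u_{+}\geq 0$, which is immediate from $\l,u_{+}\geq 0$ but should be stated explicitly at the distributional level as above.
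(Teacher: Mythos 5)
Your proof is correct and coincides with the paper's intended argument: the corollary is stated to be a ``direct application'' of Theorem \ref{prop:reg positive subharm} and Proposition \ref{prop-BK}, which is exactly the chain you lay out (Brezis--Kato gives $\Delta u_{+}\geq \l u_{+}$, the sign hypotheses yield $\Delta u_{+}\geq 0$, and the regularity theorem does the rest).
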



\section{$L^{p}$-Positivity Preserving from Liouville}\label{section_Liouville}

Once we have a Brezis-Kato inequality, the $L^{p}$-Positivity Preserving  enters in the  realm of Liouville-type theorems for $L^{p}$-subharmonic distributions. This is explained in the next

\begin{lemma} \label{lemma-Lp-Sub}
	Let $(M,g)$ be any Riemannian manifold. Consider the following $L^p$-Liouville property for subharmonic distributions, with $p\in(1,\infty)$:
	\begin{equation}\label{Lp-Sub}\tag{L-Sub$_p$}
		\begin{cases}
			\Delta u  \geq 0  \text{ on }M\\
			u \geq 0 \text{ a.e. on }M\\
			u \in L^{p}(M),
		\end{cases}
		\quad \Longrightarrow \quad
		u \equiv const \text{ a.e. on } M.
	\end{equation}
	Then
	\[
	\eqref{Lp-Sub} \quad \Longrightarrow \eqref{P}.
	\]
\end{lemma}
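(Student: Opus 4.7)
The plan is to reduce the positivity-preserving implication to the Liouville hypothesis via the Brezis--Kato inequality, applied to the negative part of $u$. Let $u \in L^{p}(M)$ be a distributional solution of $(-\Delta + 1) u \ge 0$, and set $v = -u \in L^{p}(M)$. Then $v$ distributionally solves
\[
\CL v = \Delta v - v \ge 0 \quad \text{on } M,
\]
which is exactly the form of inequality treated in Proposition \ref{prop-BK} with $\lambda \equiv 1$.

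Applying Proposition \ref{prop-BK} to $v$, the non-negative function $w := v_{+} = u_{-} \in L^{1}_{loc}(M)$ is again a distributional solution of $\CL w \ge 0$, i.e.\ $\Delta w \ge w$. Since $w \ge 0$, this in particular gives $\Delta w \ge 0$, so $w$ is a non-negative subharmonic distribution. Moreover $0 \le w \le |u|$ implies $w \in L^{p}(M)$. The Liouville hypothesis \eqref{Lp-Sub} therefore forces $w \equiv c$ a.e.\ for some constant $c \ge 0$.

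It remains to upgrade $c \ge 0$ to $c = 0$. Here I would re-invoke the full inequality $\Delta w \ge w$: since $w$ is a.e.\ equal to a constant, $\Delta w = 0$ distributionally, so testing against any $0 \le \varphi \in C^{\infty}_{c}(M)$ with $\int_{M} \varphi \dv > 0$ yields
\[
0 = \int_{M} w \,\Delta \varphi \dv \ge \int_{M} w\, \varphi \dv = c \int_{M} \varphi \dv,
\]
and hence $c \le 0$. (Equivalently, if $\vol(M) = +\infty$ the mere membership $c \in L^{p}(M)$ forces $c = 0$; if $\vol(M)<+\infty$ one uses the displayed Schr\"odinger inequality as above.) In either case $c = 0$, so $u_{-} = 0$ a.e., which is precisely \eqref{P}.

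I do not anticipate a genuine obstacle: the argument is a short two-step reduction once Proposition \ref{prop-BK} is at hand. The only point that deserves mild care is the very last step, which is why the Liouville hypothesis for subharmonic distributions alone is not quite formally sufficient --- the conclusion $c = 0$ does rely on the original Schr\"odinger-type inequality $\Delta w \ge w$ rather than merely on $\Delta w \ge 0$, but this is available at no cost from Brezis--Kato.
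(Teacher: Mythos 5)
Your proof is correct and follows essentially the same route as the paper: apply Proposition~\ref{prop-BK} to $-u$, observe that $(-u)_{+}$ is a non-negative $L^{p}$ subharmonic distribution, and invoke \eqref{Lp-Sub}. You are in fact slightly more careful than the paper's very terse proof in spelling out why the constant must vanish, using the stronger inequality $\Delta w \ge w$ (the paper leaves this implicit in the displayed chain $\Delta(-u)_+ \ge (-u)_+ \ge 0$).
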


\begin{proof}
	Suppose $u \in L^{p}(M)$ satisfies $\Delta u \leq u$. By Proposition \ref{prop-BK},
	\[
	\Delta (-u)_{+} \geq  (-u)_{+} \geq 0\text{ on }M,
	\]
i.e. $(-u)_{+} \geq 0$ is a subharmonic distribution. Obviously, $(-u)_{+}\in L^{p}(M)$. It follows from \eqref{Lp-Sub} that $(-u)_{+} = 0$ a.e. on $M$. This means precisely that $u \geq 0$ a.e. on $M$, thus proving the validity of \eqref{P}.
\end{proof}	
  
As we will show in the next theorem, the regularity result proved in  Theorem \ref{prop:reg positive subharm} permits to generalize to our setting a classical Liouville theorem for non-negative $L^p$ subharmonic functions due to S.T. Yau, \cite{Ya}.
\begin{theorem}\label{prop:Lp-Sub}
	Let $p\in(1,\infty)$. Let $(M,g)$ be a complete Riemannian manifold. Then \eqref{Lp-Sub}  holds.
\end{theorem}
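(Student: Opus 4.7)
The plan is a Yau-type $L^{p}$-Liouville argument, where the decisive new input—which dispenses with every curvature hypothesis—is the \textit{a priori} regularity supplied by Theorem \ref{prop:reg positive subharm}. Applying that theorem with exponent $2$ and then with the given exponent $p$ yields
\[
u\in L^{\infty}_{loc}(M)\cap W^{1,2}_{loc}(M),\qquad u^{p/2}\in W^{1,2}_{loc}(M).
\]
In particular, the distributional inequality $\Delta u\geq 0$ is equivalent to the weak inequality $\int_{M}\<\nabla u,\nabla\psi\>\dv\leq 0$ for every $0\leq \psi\in W^{1,2}_{c}(M)$.

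Fix an origin $o\in M$ and, exploiting completeness, form a Lipschitz cut-off $\vp_{R}:M\to[0,1]$ with $\vp_{R}\equiv 1$ on $B_{R}(o)$, $\supp\vp_{R}\Subset B_{2R}(o)$, and $|\nabla\vp_{R}|\leq 2/R$ a.e. For $\ve>0$, I plug the non-negative test function
\[
\psi=(u+\ve)^{p-1}\vp_{R}^{2}\,\in\,W^{1,2}_{c}(M)
\]
(admissible since $u+\ve\geq \ve>0$ and $u\in L^{\infty}_{loc}\cap W^{1,2}_{loc}$) into the weak inequality. Expanding $\nabla\psi$ and bounding the cross-term through Young's inequality $2ab\leq \d a^{2}+\d^{-1}b^{2}$ with $\d=(p-1)/2$ produces the Caccioppoli-type estimate
\[
\frac{p-1}{2}\int_{M}(u+\ve)^{p-2}|\nabla u|^{2}\vp_{R}^{2}\dv\,\leq\,\frac{2}{p-1}\int_{M}(u+\ve)^{p}|\nabla\vp_{R}|^{2}\dv.
\]

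Now send $\ve\to 0^{+}$. On the right, dominated convergence with majorant $(u+1)^{p}|\nabla\vp_{R}|^{2}\in L^{1}(M)$ yields $\tfrac{2}{p-1}\int_{M} u^{p}|\nabla\vp_{R}|^{2}\dv$. On the left, on $\{u>0\}$ one has $(u+\ve)^{p-2}|\nabla u|^{2}\to (2/p)^{2}|\nabla u^{p/2}|^{2}$ by continuity, while on $\{u=0\}$ both $\nabla u$ and $\nabla u^{p/2}$ vanish a.e.\ by the standard property of Sobolev functions on level sets; hence Fatou's lemma gives
\[
\int_{M}|\nabla u^{p/2}|^{2}\vp_{R}^{2}\dv\,\leq\,\frac{C(p)}{R^{2}}\int_{B_{2R}(o)} u^{p}\dv\,\leq\,\frac{C(p)}{R^{2}}\|u\|_{L^{p}(M)}^{p}.
\]
Since $u\in L^{p}(M)$, letting $R\to+\infty$ makes the right-hand side vanish; monotone convergence on the left then forces $\nabla u^{p/2}\equiv 0$ a.e.\ on $M$, so $u$ is a.e.\ constant.

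The main technical obstacle is the sub-quadratic range $p\in(1,2)$, in which the coefficient $(u+\ve)^{p-2}$ diverges on $\{u=0\}$ as $\ve\to 0$. Maintaining the $\ve$-regularization through the Young step, invoking Fatou (rather than dominated convergence) on the left-hand side, and exploiting the a.e.\ vanishing of $\nabla u$ on the zero set of $u$ bypasses this issue cleanly. Completeness is used only to construct the global Lipschitz cut-offs $\vp_{R}$ with $|\nabla\vp_{R}|\leq 2/R$ from the distance function; no curvature assumption whatsoever is needed.
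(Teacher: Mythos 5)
Your proposal is a correct proof and runs along essentially the same Yau-type $L^{p}$-Liouville lines as the paper's: apply the a priori regularity of Theorem~\ref{prop:reg positive subharm}, test the weak subharmonicity with $(u+\ve)^{p-1}\vp_{R}^{2}$ to obtain a Caccioppoli estimate, and send first the regularization parameter to zero and then $R\to\infty$. The one place where the two arguments genuinely diverge is the passage $\ve\to 0$ (respectively $\delta\to 0$ in the paper). You pass to the limit via Fatou combined with the pointwise identity $|\nabla u^{p/2}|^{2}=(p/2)^{2}u^{p-2}|\nabla u|^{2}\chi_{\{u>0\}}$ a.e.; the paper instead applies Theorem~\ref{prop:reg positive subharm} to the shifted function $u_{\delta}=u+\delta$, runs the Caccioppoli estimate for $u_{\delta}$, observes that $\{u_{\delta}^{p/2}\}$ is bounded in $W^{1,2}(B_{k})$, extracts a weak $W^{1,2}$ limit equal to $u^{p/2}$, and uses lower semicontinuity of the Dirichlet energy. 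The advantage of the paper's route is that it only ever needs the chain rule for $u_{\delta}^{q}$ with $u_{\delta}\ge\delta>0$ bounded away from the singularity of $t\mapsto t^{q}$ at the origin (this is exactly Lemma~\ref{lem:reg psi}), while your route requires the chain rule for $u^{p/2}$ where $u$ can vanish. For $p<2$ the derivative $t\mapsto (p/2)t^{p/2-1}$ blows up at $t=0$, so the step ``$\nabla u^{p/2}=(p/2)u^{p/2-1}\nabla u$ a.e.\ on $\{u>0\}$'' is not an off-the-shelf chain rule and deserves a line or two of justification. It is, however, a fillable gap: the uniform-in-$\ve$ Caccioppoli bound together with Fatou first shows $u^{p/2-1}\nabla u\,\chi_{\{u>0\}}\in L^{2}_{loc}$, after which a standard truncation ($f_{\ve}(t)=\max(t,\ve)^{p/2}$, $\nabla f_\ve(u)\to(p/2)u^{p/2-1}\nabla u\,\chi_{\{u>0\}}$ in $L^{2}_{loc}$ by dominated convergence) identifies this quantity with $\nabla u^{p/2}$. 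With that clarification your argument is complete and yields exactly the sub-quadratic refinement noted in Remark~\ref{rem-subquadratic} as well.
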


\begin{remark}\label{rmk-Liouville}
 The endpoint cases $p=1$ and $p=+\infty$ must be excluded. Indeed, the Hyperbolic space supports infinitely many bounded (hence positive) harmonic functions whereas, on the opposite side, positive, nonconstant, $L^{1}$-harmonic functions on complete Riemann surfaces with (finite volume and) super-quadratic curvature decay to $-\infty$ was constructed by P. Li and R. Schoen in \cite{LS}. Unfortunately, the existence of these functions tells us nothing about the failure of the $L^{1}$-Preservation Property.
\end{remark}

\begin{proof}
For $\d\in(0,1)$, define $u_\d=u+\d>0$ and observe that $u_\delta$ is subharmonic and $u^{q/2}_\d\in W^{1,2}_{loc}(M)\cap L^\infty_{loc}(M)$ for any $q\in (1,\infty)$ by Theorem \ref{prop:reg positive subharm}. 
	Thanks to the local regularity, the subharmonicity condition can be written as
\begin{equation}\label{eq:semiweak subharm}
	- \int_{M}g(\nabla u_\d , \nabla \psi) \geq 0,\quad \forall\,  0\leq \psi \in W^{1,2}_{c}(M).
\end{equation}
Set
\[
\psi = u_\d^{p-1} \vp^{2}
\]
with $0 \leq \vp \in C^{\infty}_{c}(M)$. Since 
\begin{equation}\label{eq:bounds ud}\delta \le |u_\d| \le \|u\|_{L^\infty(\supp \vp)}+\d,\end{equation}
we have that $\psi\in L_c^2(M)$. We need the following lemma which will be proved below.
\begin{lemma}\label{lem:reg psi}
	For all $q>0$, $u_\d^{q}\in W^{1,2}_{loc}(M)$ and the weak gradient of $u_\d^{q}$ satisfies  
	\[
	\nabla u_\d^{q} = qu_\d^{q-1} \nabla u_\d.\]
	\end{lemma}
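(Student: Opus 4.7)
The plan is to reduce the statement to a routine application of the chain rule for compositions of $W^{1,2}_{loc}$ functions with globally Lipschitz maps, once we exploit the fact that $u_\delta$ is locally bounded both above and, crucially, below away from zero.

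First, I would invoke Theorem~\ref{prop:reg positive subharm} with the choice $p=2$ to conclude that $u \in L^\infty_{loc}(M) \cap W^{1,2}_{loc}(M)$. Hence $u_\delta = u + \delta$ inherits the same regularity, with $\nabla u_\delta = \nabla u$ a.e., and in addition
\[
\delta \leq u_\delta \leq \|u\|_{L^\infty(\Omega)} + \delta =: C_\Omega
\]
on any relatively compact open set $\Omega \Subset M$.

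Next, for such a fixed $\Omega$ I would pick a function $F \in C^1(\mathbb{R})$ that agrees with $t \mapsto t^q$ on the interval $[\delta/2, 2 C_\Omega]$ and is extended affinely outside so as to be globally Lipschitz with bounded derivative. The standard chain rule for Sobolev functions (working in a coordinate chart so that the Euclidean statement applies verbatim) yields $F \circ u_\delta \in W^{1,2}(\Omega)$ with weak gradient $F'(u_\delta)\nabla u_\delta$ a.e.\ on $\Omega$. Because $F(u_\delta) = u_\delta^q$ and $F'(u_\delta) = q\, u_\delta^{q-1}$ a.e.\ on $\Omega$, the identity $\nabla u_\delta^q = q\, u_\delta^{q-1}\nabla u_\delta$ follows, with the right-hand side in $L^2(\Omega)$ since $q\, u_\delta^{q-1}$ is bounded on $\Omega$ while $\nabla u_\delta \in L^2(\Omega)$. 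Arbitrariness of $\Omega$ gives $u_\delta^q \in W^{1,2}_{loc}(M)$.

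There is essentially no obstacle: the only delicate point worth stressing is that when $q < 1$ the map $t \mapsto t^q$ has unbounded derivative at $t = 0$, which is precisely why the strict lower bound $u_\delta \geq \delta > 0$ is indispensable; for $q \geq 1$ it is instead the local $L^\infty$ upper bound, guaranteed by Theorem~\ref{prop:reg positive subharm}, that makes the truncation/extension of $t \mapsto t^q$ into a Lipschitz function possible. The manifold setting plays no role beyond allowing us to work in a chart, as the statement is purely local.
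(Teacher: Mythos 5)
Your proof is correct and follows essentially the same route as the paper's: both exploit that $u_\delta$ is pinched between $\delta$ and a local $L^\infty$ bound, so that $t\mapsto t^q$ has bounded derivative on the relevant range, and then apply the Sobolev chain rule (the paper invokes \cite[Lemma 7.5]{GT} via mollification, you instead explicitly truncate and extend $t\mapsto t^q$ to a globally Lipschitz $C^1$ function before applying the same chain rule). The explicit appeal to Theorem \ref{prop:reg positive subharm} with $p=2$ to obtain $u_\delta\in W^{1,2}_{loc}\cap L^\infty_{loc}$ is the same input the paper uses, just stated more transparently.
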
 
\noindent This lemma with $q=p-1$ implies that $\psi \in W_c^{1,2}(M)$ with 
\[\nabla \psi = (p-1)u_\d^{p-2} \vp^{2} \nabla u_\d + u_\d^{p-1} \nabla(\vp^{2}).\]
Thus, we can insert $\psi$ in \eqref{eq:semiweak subharm} and compute	 
	 		\begin{align*}
	 	0 &\geq   (p-1)\int_{M} \vp^{2}u_\d^{p-2}|\nabla u_\d |^{2} + 2\int_{M} \vp u_\d^{p-1}g(\nabla u_\d,\nabla \vp)\\
	 	&\ge (p-1-\ve)\int_{M} \vp^{2}u_\d^{p-2}|\nabla u_\d |^{2} - \ve^{-1} \int_{M} u_\d^{p}|\nabla \vp |^{2},
	 \end{align*}
	 for any $\ve\in (0,p-1)$. Applying Lemma \ref{lem:reg psi} with $q=p/2$, we get the Caccioppoli inequality
	 \begin{align}\label{eq:caccioppoli}
\frac{4\ve(p-1-\ve)}{p^2}\int_{M} \vp^2 |\nabla u_\d^{p/2}|^{2}&=\ve(p-1-\ve)\int_{M} \vp^2 u_\d^{p-2}|\nabla u_\d|^{2}\\
&\leq \int_{M} u_\d^{p}|\nabla \vp|^{2}.\nonumber
	 \end{align}
	Let $\vp=\vp_{k}$ be a sequence of first order cut-off functions with respect to a fixed reference point $o$, i.e. $\vp_k\in C^\infty_c(B_{2k}(o))$ , $\vp_k\equiv 1$ on $B_k(o)$ and $\|\nabla\vp_k\|_{L^\infty}\le 2/k$.
	Inserting $\vp=\vp_k$ in \eqref{eq:caccioppoli}, we obtain that the family $\{u_\d^{p/2}\}$, $\d\in(0,1)$, is uniformly bounded in $W^{1,2}(B_k(o))$. Hence, for some sequence $\d_j\to 0$, the subsequence $u_{\d_j}$ converges weakly in $W^{1,2}(B_k(o))$ to some $v\in W^{1,2}(B_k(o))$. Since $u_{\d_j}^{p/2}$ converges point-wise a.e. to $u^{p/2}$, we deduce that $v=u^{p/2}$. By the weak lower semicontinuity of the energy 
	\begin{equation}\label{eq:caccioppoli-vp}
	\int_{B_k} |\nabla u^{p/2}|^2 \le \liminf_{j} \int_M   u_{\d_j}^{p}|\nabla \vp_k|^{2}=\int_M   u^{p}|\nabla \vp_k|^{2} \le \frac{4}{k^2}\int_{B_{2k}(o)\setminus B_k(o)} u^p,
	\end{equation}
	where we have used the Lebesgue dominated convergence theorem in the middle equality. We finally let $k \to +\infty$, and apply monotone convergence at the LHS and dominated convergence (since $u\in L^p$) at the RHS of \eqref{eq:caccioppoli-vp}, to deduce that $ |\nabla u^{p/2}|^2=0$ a.e. on $M$. In particular $u^{p/2}$ and thus $u$ are constant on $M$. 
\end{proof}

\begin{remark} \label{rem-subquadratic} As it is clear from the proof of Theorem \ref{prop:Lp-Sub}, on any given complete Riemannian manifold and for any $p\in(1,\infty)$, the Liouville property \eqref{Lp-Sub} holds in the stonger form:
\begin{equation*}
	\begin{cases}
		\Delta u  \geq 0  \text{ on }M\\
		u \geq 0 \text{ a.e. on }M\\
		u \in L_{loc}^{p}(M)\text{ and }\|u\|^p_{L^p(B_{2k}(o)\setminus B_k(o))}=o(k^2),
	\end{cases}
	\quad \Longrightarrow \quad
	u \equiv c \text{ a.e. on } M.
\end{equation*}
Accordingly, also the Positivity Preserving property holds in this class of functions larger than $L^p(M)$.
\end{remark}

It remains to prove the lemma.
\begin{proof}[Proof of Lemma \ref{lem:reg psi}]
For any $x_0\in M$, choose a chart $\phi:U\to \phi(U)=:V\subset \mathbb R^n$ in the differential structure of $M$ so that $x_0\in U$, and let  $\{x_1,\dots,x_n\}$ be a local coordinate system defined in $V$. With an abuse of notation, we denote here $u_\d\circ \phi^{-1}$ again by $u_\d$. We know that $u_\d\in W^{1,1}(V)\subset W^{2,2}(V)$.
We \textsl{claim} that 
\[\partial_i(u_\d^{q})=qu_\d^{q-1}\partial_i u_\d,\quad i=1,\dots,n.\] Then trivially
\[\nabla_{\partial_i} (u_\d^{q}) =g^{ji}\partial_j(u_\d^{q})=qu_\d^{q-1}g^{ji}\partial_j u_\d =qu_\d^{q-1}\nabla_{\partial_i} u_\d,\quad i=1,\dots,n,\]
which conclude the proof of the lemma. To prove the claim, we follow the proof of \cite[Lemma 7.5]{GT}.
Let $\{u_{m}\}$ be a sequence of functions in $W^{1,1}(V)$ such that $u_{m}\to u_\d$ strongly in $W^{1,1}_{loc}(V_1)$ for some open set $V_1\Subset V$. The $u_m$'s can be constructed by convolving $u_\d$ with a sequence $\{\rho_m\}$ of
non-negative mollifiers satisfying $\|\rho_m\|_{L^1(V)}=1$ for all $m$; see \cite[Lemmas 7.2 and 7.3]{GT}. Accordingly, by \eqref{eq:bounds ud} we know that
\begin{equation*}
	0<\delta \le |u_m| \le \|u\|_{L^\infty(\supp \vp)}+\d:=\delta',\quad\forall m\ge 1.
\end{equation*}
Set $f(t)=t^{q}$, so that $u_\d^{q}=f(u_\d)$. We note that $|f'|$ is bounded on $[\d,\d']$. This control on the derivative of $f$ is sufficient to reproduce the remaining steps of the proof of \cite[Lemma 7.5]{GT} and thus to conclude the proof of Lemma \ref{lem:reg psi}.
\end{proof}

Combining Lemma \ref{lemma-Lp-Sub} with Theorem \ref{prop:Lp-Sub} proves Theorem \ref{th:BMS}.

\appendix

\section{A different proof of the Brezis-Kato inequality}\label{section-AnconaKato}

We are going to provide a different proof of Proposition \ref{prop-BK} by using another trick to get rid of the linear term of the Schr\"odinger operator $\CL = \Delta - \l(x)$. This trick is inspired to \cite[Section 3]{GW}. 

\begin{proposition}[Brezis-Kato inequality]
Let $(M,g)$ be a Riemannian manifold. If $u \in L^{1}_{loc}(M)$ satisfies $\Delta u \geq \l u$ then $u_{+} \in L^{1}_{loc}(M)$ is a  solution of  $\Delta u_{+} \geq \l u_{+}$.
\end{proposition}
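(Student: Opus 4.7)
The plan is to trade the zero-order term of $\CL$ for a first-order drift and then invoke the classical Kato inequality of Ancona, which applies to $W^{1,1}_{loc}$ functions whose Laplacian is a signed Radon measure. Arguing locally on a ball $B \Subset M$, the Protter--Weinberger trick from Section \ref{sect:approx} furnishes a smooth positive $\alpha$ on $B$ with $\CL \alpha = 0$, and setting $v = u/\alpha \in L^1_{loc}(B)$ one obtains $\Delta_\alpha v \geq 0$ distributionally, where $\Delta_\alpha = \Delta + 2g(\nabla\log\alpha, \nabla\cdot)$ has no zero-order part. Since $\alpha > 0$ implies $u_+ = \alpha v_+$, the Protter--Weinberger lemma reduces the whole problem to proving the pure Kato inequality $\Delta_\alpha v_+ \geq 0$ on $B$.

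The first step will be to upgrade the regularity of $v$ from $L^1_{loc}$ to $W^{1,1}_{loc}$. By Riesz representation, the non-negative distribution $\Delta_\alpha v$ is a positive Radon measure $\mu$ on $B$. Standard elliptic regularity for divergence-form operators with smooth coefficients and measure data (via the local Green's function of $\Delta_\alpha$, whose existence rests on unique solvability of the Dirichlet problem, cf.\ \cite{LSW}) then gives $v \in W^{1,q}_{loc}(B)$ for every $q < n/(n-1)$. In particular $v\in W^{1,1}_{loc}(B)$, so that $2g(\nabla\log\alpha, \nabla v) \in L^1_{loc}(B)$, and the identity $\Delta v = \mu - 2g(\nabla\log\alpha, \nabla v)$ exhibits $\Delta v$ itself as a signed Radon measure.

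With these properties in hand, Ancona's Theorem 5.1 in \cite{An} applies to $v$, delivering $v_+ \in W^{1,1}_{loc}(B)$ together with the distributional Kato inequality $\Delta v_+ \geq \chi_{\{v > 0\}}\,\Delta v$ as signed Radon measures on $B$. Combining this with Stampacchia's identity $\nabla v_+ = \chi_{\{v>0\}} \nabla v$ one obtains
\[
\Delta_\alpha v_+ = \Delta v_+ + 2g(\nabla\log\alpha, \nabla v_+) \geq \chi_{\{v>0\}}\bigl(\Delta v + 2g(\nabla\log\alpha, \nabla v)\bigr) = \chi_{\{v>0\}}\Delta_\alpha v \geq 0,
\]
and the Protter--Weinberger correspondence, now applied to $u_+ = \alpha v_+$, yields $\CL u_+ \geq 0$ on $B$. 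A partition-of-unity argument identical to the one at the end of the original proof of Proposition \ref{prop-BK} then globalizes the conclusion to $M$.

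I expect the regularity upgrade from $L^1_{loc}$ to $W^{1,1}_{loc}$ in the middle paragraph to be the main technical nuisance: for the flat Laplacian this is classical potential theory, but for the drift operator $\Delta_\alpha$ on a Riemannian background one must either cite a sufficiently general measure-data regularity theorem, or reproduce the classical potential argument directly by decomposing $v$ on a slightly smaller ball into the $\Delta_\alpha$-Green potential of $\mu$ and a smooth $\Delta_\alpha$-harmonic remainder.
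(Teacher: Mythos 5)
Your argument is correct, and it takes a route genuinely different from both proofs in the paper. The main proof of Proposition \ref{prop-BK} avoids Ancona entirely, running the smooth convex functions $H_\ve(t)=(t+\sqrt{t^2+\ve})/2$ through the monotone smooth approximation of Theorem \ref{th:smoothing}. The appendix proof does invoke Ancona, but only after first decomposing $u=w+g$ on a small domain $\Omega$, where $g\in W^{1,1}_0(\Omega)$ solves the Dirichlet problem $\Delta g=\l u$ and $w$ is a subharmonic distribution; Ancona is then applied to the explicitly $W^{1,1}$ approximants $u_k=w_k+g$ produced by the monotone approximation of $w$, and the conclusion passes to the limit. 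You instead divide by a positive $\CL$-harmonic function $\alpha$ to absorb the zero-order term into a drift, use measure-data elliptic regularity (local Green potential of $\Delta_\alpha$, Riesz decomposition, Weyl's lemma) to get $v=u/\alpha\in W^{1,1}_{loc}$ directly, and then invoke Ancona once on $v$ itself. What this buys you is a proof with no approximating sequence at all; what it costs is the regularity step, which you correctly flag as the place requiring care. That step is indeed sound: in a chart $\Delta_\alpha$ becomes the smooth divergence-form operator $\L=\partial_i(a^{ij}\partial_j)$, one restricts the non-negative measure $\L v$ to a small ball, subtracts its Green potential, and the difference is $\L$-harmonic hence smooth; this lies within the same circle of ideas (Green functions and Dirichlet solvability via \cite{LSW}, Sj\"ogren's identification of $L^1_{loc}$-subsolutions with potential-theoretic subharmonics) already used in Section \ref{sect:approx}. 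A small simplification is available to you: since Ancona's Theorem 5.1 in \cite{An} is formulated for a general divergence-form elliptic operator, you may apply it directly to $\L$ in the chart and conclude $\L v_+ \geq \chi_{\{v>0\}}\L v \geq 0$ without ever isolating the drift term $2g(\nabla\log\alpha,\nabla\cdot)$ or invoking Stampacchia's chain rule for $\nabla v_+$.
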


\begin{proof}
 We fix a smooth domain $\Omega \Subset M$ where subharmonic distributions posses a monotone approximation by smooth subharmonic functions. Next, we consider the Dirichlet problem
 \[
\begin{cases}
 \Delta g = \l u & \text{ in }\Omega \\
 g = 0 & \text{ on }\partial \Omega
\end{cases}
 \]
and we note that, since $\l u \in L^{1}(\Omega)$, then it has a unique solution $g\in W^{1,1}_{0}(\Omega)$; see \cite[Theorem 5.1]{LSW}. The new function
 \[
 w = u - g \in L^{1}(\Omega)
 \]
is a subharmonic distribution, namely, $\Delta w \geq 0$.
 
Let  $w_{k} \geq w_{k+1} \geq w$ be a monotonic approximation of $w$ by smooth solutions of $\Delta w_{k} \geq 0$ and define
 \[
 u_{k} = w_{k} + g \in W^{1,1}(\Omega).
 \]
 Then, by construction,
 \[
 i)\, u_{k} \searrow u \text{ a.e. in }\Omega, \quad ii)\, \Delta u_{k} \geq \l u \text{ in }\Omega.
 \]
 We now apply to each function $u_{k} \in W^{1,1}(\Omega)$ the Kato inequality by Ancona, \cite[Theorem 5.1]{An}, and deduce that
 \[
 \Delta (u_{k})_{+} \geq 1_{\{ u_{k}>0\}} \l u \text{ in }\Omega.
 \]
 This means that, for any $0 \leq \vp \in C^{\infty}_{c}(\Omega)$,
 \[
 \int_{\Omega} (u_{k})_{+} \Delta \vp \geq \int_{\Omega}  1_{\{ u_{k}>0\}} \l u  \vp.
 \]
 We elaborate the RHS of this latter as follows:
 \begin{align*}
 \int_{\Omega}  1_{\{ u_{k}>0\}} \l  u  \vp &= \int_{\Omega}  1_{\{ u_{k}>0\}} \l (u-u_{k}) \vp
 + \int_{\Omega}  1_{\{ u_{k}>0\}} \l u_{k}  \vp \\
 &= \int_{\Omega}  1_{\{ u_{k}>0\}} \l (u-u_{k})  \vp + \int_{\Omega}  \l (u_{k})_{+} \vp \\
 & \to \int_{\Omega} \l u_{+} \vp, \quad \text{ as }k \to +\infty,
 \end{align*}
where, in the last line, we have used the dominated convergence theorem. Since, on the other hand,
\[
\int_{\Omega} (u_{k})_{+} \Delta \vp \to \int_{\Omega} u_{+} \Delta \vp \quad \text{ as }k\to +\infty
\]
we conclude that, for all $0 \leq \vp \in C^{\infty}_{c}(\Omega)$,
\[
\int_{\Omega} u_{+} \Delta \vp  \geq \int_{\Omega} \l u_{+} \vp.
\]
This means that
\[
\Delta u_{+} \geq \l u_{+} \quad \text{ in }\Omega.
\]
Using a partition of unity argument, the last differential inequality extends to any relatively compact domain.
\end{proof}

\subsection*{Acknowledgements.} We are indebted to Andrea Bonfiglioli and Ermanno Lanconelli for clarifying some issues about their article \cite{BL}, which has revealed fundamental in our Section \ref{sect:approx}. We also thanks Peter Sj\"ogren for sharing with us some useful comments about the Brelot-Hervé theory of subharmonic functions and the smoothing approximation procedure. Finally, we are very grateful to Batu G\"uneysu for pointing out to us the BMS conjecture, and for several fruitful discussions on the topic during the last years.



\begin{thebibliography}{9999} 

\bibitem[An]{An}  Ancona, A. {\it Elliptic operators, conormal derivatives and positive parts of functions. With an appendix by Haïm Brezis}. J. Funct. Anal. {\bf 257} (2009), no. 7, 2124--2158.

\bibitem[BS]{BS}  Bianchi, D.; Setti, A. G. {\it Laplacian cut-offs, porous and fast diffusion on manifolds and other applications}. Calc. Var. Partial Differential Equations {\bf 57} (2018), no. 1, Paper No. 4, 33 pp.

\bibitem[BL]{BL} Bonfiglioli, A.; Lanconelli, E.
{\it Subharmonic functions in sub-Riemannian settings.}
J. Eur. Math. Soc. (JEMS) \textbf{15} (2013), no. 2, 387--441.

\bibitem [BMS]{BMS} Braverman, M.; Milatovich, O.; Shubin, M. {\it Essential selfadjointness of Schr\"odinger-type operators on manifolds}.  Uspekhi Mat. Nauk {\bf 57} (2002), no. 4(346), 3--58; translation in Russian Math. Surveys {\bf 57} (2002), no. 4, 641--692.

\bibitem[Br]{Br} Brezis, H. {\it Semilinear equations in $\mathbb R^N$ without condition at infinity.} Appl. Math. Optim. \textbf{12} (1984), no. 3, 271–282.


\bibitem[GT]{GT} Gilbarg, D.; Trudinger, N.S., {\it Elliptic Partial Differential Equations of Second Order.} Reprint of the 1998 edition. Classics in Mathematics. Springer-Verlag, Berlin, 2001. 

\bibitem[G\"u1]{Gu-JGEA} Güneysu, B. {\it Sequences of Laplacian cut-off functions}. J. Geom. Anal. {\bf 26} (2016), no. 1, 171--184

\bibitem [G\"u2] {Gu-survey} G\"uneysu, B. {\it The BMS conjecture}. Ulmer Seminare \textbf{20} (2017) 97--101. Preprint available at \url{https://arxiv.org/pdf/1709.07463.pdf}.

\bibitem [G\"u3] {Gu-book} G\"uneysu, B.  {\it Covariant Schrödinger semigroups on Riemannian manifolds}. 
Operator Theory: Advances and Applications, {\bf 264}. Birkhäuser/Springer, Cham, 2017.

\bibitem[GW]{GW} Greene, R. E.; Wu, H. {\it $C^{\infty}$ approximations of convex, subharmonic, and plurisubharmonic functions}. Ann. Sci. École Norm. Sup. (4) {\bf 12} (1979), no. 1, 47--84.

\bibitem[He]{He} Hervé, R.-M. {\it Récherches axiomatiques sur la théorie des fonctions surbarmoniques et du potentiel.} Ann. Inst. Fourier \textbf{12} (1962), 415--571.

\bibitem[LS]{LS} Li, P.; Schoen, R. {\it $L^{p}$ and mean value properties of subharmonic functions on Riemannian manifolds}. Acta Math. {\bf 153} (1984), no. 3-4, 279--301.

\bibitem[LSW]{LSW} Littman, W.; Stampacchia, G.; Weinberger, H. F. {\it Regular points for elliptic equations with discontinuous coefficients.} Ann. Scuola Norm. Sup. Pisa Cl. Sci. (3) \textbf{17} (1963), 43–77. 

\bibitem[Ka]{Ka} Kato, T. {\it Schr\"odinger operators with singular potentials}. Israel J. Math. {\bf 13} (1972), 135--148 (1973). 

\bibitem[MV]{MV} Marini, L.; Veronelli, G. {\it Some functional properties of Cartan-Hadamard manifolds with very negative curvature}. Preprint (2021) arXiv:2105.09024.

\bibitem[Po]{Po} Ponce, A. C. {\it Elliptic PDEs, measures and capacities. From the Poisson equations to nonlinear Thomas-Fermi problems.} EMS Tracts in Mathematics, \textbf{23}. European Mathematical Society (EMS), Zürich, 2016. x+453

\bibitem[PW]{PW} Protter, M. H.; Weinberger, H. F. {\it Maximum principles in differential equations.} Prentice-Hall, Inc., Englewood Cliffs, N.J. 1967 x+261 pp.
\bibitem[Sj]{Sj} Sjögren, P. {\it On the adjoint of an elliptic linear differential operator and its potential theory.} Ark. Mat. \textbf{11} (1973), 153--165

\bibitem[Ya]{Ya} Yau, S. T. {\it Some function-theoretic properties of complete Riemannian manifold and their applications to geometry}. Indiana Univ. Math. J. {\bf 25} (1976), no. 7, 659--670. 

\end{thebibliography}
\end{document}